\theoremstyle{plain}
    \newtheorem{theorem}{Theorem}[section]
    \newtheorem{proposition}[theorem]{Proposition}
    \newtheorem{claim}[theorem]{Claim}
\theoremstyle{definition}
    \newtheorem{definition}[theorem]{Definition}
    \newtheorem{question}[theorem]{Question}
\theoremstyle{remark}
\crefname{maintheorem}{Theorem}{Theorems}
\crefname{claim}{Claim}{Claims}
\newcommand{\C}{\mathbb{C}}
\newcommand{\R}{\mathbb{R}}
\newcommand{\Z}{\mathbb{Z}}
\newcommand{\SL}{\mathop{\mathrm{SL}}\nolimits}
\newcommand{\GL}{\mathop{\mathrm{GL}}\nolimits}
\newcommand{\As}{\mathop{\mathrm{As}}\nolimits}
\title{
    Knot quandles distinguish Suciu's ribbon knots
}
\author{
    Jumpei Yasuda
}
\date{
    \today
}
\address{Department of Mathematics, Graduate School of Science, Osaka Metropolitan University, Osaka 558‐8585, JAPAN}
\email{j.yasuda@omu.ac.jp}
\keywords{2-knot, Knot quandle, ribbon knot}
\subjclass[2020]{Primary 57K45, Secondary 57K10}
\begin{document}
    \maketitle
    \begin{abstract}
    The knot quandle is an invariant of $n$-knots.
    % While it is known to be an almost complete invariant for 1-knots, its strength for higher-dimensional knots remains less understood.
    In this note, we study the knot quandles of Suciu’s ribbon $n$-knots, an infinite family of knots with isomorphic knot groups.
    We prove that their knot quandles are mutually non-isomorphic.
    Furthermore, we compute types of these quandles.
\end{abstract}

\section{Introduction}

An \textit{n-knot} $K$ is an oriented $n$-sphere $S^n$ smoothly embedded in $S^{n+2}$ ($n \geq 1$), and the \textit{knot group} of $K$, denoted by $G(K)$, is the fundamental group of the complement.

A \textit{quandle} \cite{Joyce82,Matveev84} is a non-empty set $Q$ equipped with a binary operation $*$ satisfying the following:
\begin{enumerate}
    \item For any $x \in Q$, $x*x = x$.
    \item For any $y \in Q$, the map $S_y: Q \to Q$ defined by $S_y(x) = x*y$ is a bijection.
    \item For any $x,y,z \in Q$, $(x*y)*z = (x*z)*(y*z)$.
\end{enumerate}
These axioms are related to Reidemeister moves in knot theory.
Two quandles $Q$ and $Q'$ are \textit{isomorphic} if there exists a bijection $f: Q \to Q'$ such that $f(x*y) = f(x)*f(y)$ for all $x,y \in Q$.

For an $n$-knot, the \textit{knot quandle} is defined as an invariant (\Cref{Def of knot quandle}).
This is a powerful invariant of 1-knots.
In fact, the knot quandle is a complete invariant of 1-knots up to orientations of knots and $S^3$.
For higher-dimensional knots, little is known about the strength of knot quandles.

The knot group is obtained from the knot quandle via the \textit{associated group}.
Thus, if knot quandles are isomorphic,then the corresponding knot groups are also isomorphic.
% Thus, an isomorphism between knot quandles implies an isomorphism between the corresponding knot groups.
This leads to the following question:
\begin{question}
    Are knot quandles stronger invariants of $n$-knots than knot groups?
    % How powerful are knot quandles as invariants of $n$-knots compared to knot groups?
\end{question}

In \cite{Tanaka-Taniguchi2025}, Tanaka and Taniguchi gave the first examples answering this question for 2-knots:
They provided infinitely many triples of 2-knots $(K_1, K_2, K_3)$ whose knot groups are isomorphic but whose knot quandles are mutually non-isomorphic.
To distinguish their knot quandles, they proved that the \textit{types} of these quandles are different (\Cref{Def of type}).

In \cite{Suciu1985}, Suciu discovered infinitely many distinct (ribbon) $n$-knots, say $R_k^n$, whose knot groups are all isomorphic, where $k \geq 1$ and $n \geq 2$ are integers.
Definitions are given in \Cref{Section: ribbon}.
For $n=2$, he used $\pi_2(S^4 \setminus R_k^2)$ as a $\Z\pi_1$-module to distinguish them.

In this note, we focus on the knot quandles of Suciu's ribbon knots and prove the following theorem:

\begin{theorem}\label{Main Theorem}
    For each $n \geq 2$, the knot quandles of $R_k^n$, $k \geq 1$, are mutually non-isomorphic.
\end{theorem}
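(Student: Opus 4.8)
The plan is to distinguish the quandles by their \emph{types} (\Cref{Def of type}). As the type is a quandle isomorphism invariant, it suffices to produce a function $N(k)$, injective in $k$, with $\mathrm{type}\,Q(R_k^n) = N(k)$ for every $k \geq 1$; pairwise distinct types then give pairwise non-isomorphic quandles. The mechanism is that, although the knots $R_k^n$ share a single knot group $G$, the natural map $Q(R_k^n) \to \mathrm{Conj}(G)$ onto the meridian conjugacy class need not be injective, so the order of an inner automorphism $S_y$ acting on the full quandle $Q(R_k^n)$ can be a proper multiple of the order of the corresponding conjugation in $G$, and this multiple can vary with $k$; this is what allows the type to separate knots that $G$ alone cannot.

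First I would read a finite quandle presentation of $Q(R_k^n)$ off the ribbon structure recalled in \Cref{Section: ribbon}: the generators are meridians of the ribbon disks and the bands prescribe the relations, with the integer $k$ entering as a parameter. Using this presentation, the upper bound $\mathrm{type}\,Q(R_k^n) \leq N(k)$ is the routine direction: by repeatedly applying the defining relations one checks $S_y^{N(k)} = \id$ on each generator $y$, so that the order of every $S_y$ divides $N(k)$ and hence so does the type.

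The main obstacle is the matching lower bound $\mathrm{type}\,Q(R_k^n) \geq N(k)$. Here I would construct a quandle epimorphism $f \colon Q(R_k^n) \to T_k$ onto a finite target quandle $T_k$ in which some inner automorphism has order exactly $N(k)$. Because the distinction among Suciu's knots lives in the non-abelian part of the complement (for $n=2$, in $\pi_2$ as a $\Z\pi_1$-module) rather than in $G$ itself, the target $T_k$ must be chosen non-abelian and genuinely sensitive to the twisting parameter $k$; selecting such a $T_k$ and verifying the coloring is the crux, and I expect the explicit ribbon description of $R_k^n$ to furnish the needed family. Given $f$, the intertwining identity $f \circ S_y = S_{f(y)} \circ f$ together with surjectivity forces the order of $S_y$ upstairs to be divisible by $N(k)$, whence $\mathrm{type}\,Q(R_k^n) \geq N(k)$. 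Combining the two bounds gives $\mathrm{type}\,Q(R_k^n) = N(k)$, and injectivity of $N$ yields the theorem.
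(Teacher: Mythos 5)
Your strategy cannot work for these knots: the type is not able to separate them. The paper's own \Cref{Type of suciu ribbon knots} shows that $\mathrm{type}(Q(R_k^n)) = \infty$ for every $k$ (the proof exhibits meridians $x_0, y_0$ with $x_0 *^i y_0 \neq x_0$ for all $i \geq 1$, detected by the Tong--Yang--Ma representation of $B_3 \cong G(R_k^n)$), so there is no finite value $N(k)$, let alone an injective one. This is flagged in the introduction precisely as the point of contrast with the Tanaka--Taniguchi examples, where types do the separating. Beyond this fatal obstruction, your write-up is also incomplete on its own terms: you never produce the function $N(k)$, the finite target quandles $T_k$, or the colorings $f$, and the ``routine'' upper bound $S_y^{N(k)} = \id$ is exactly the step that fails here --- finiteness of the type is a strong condition (for fibered $2$-knots it equals the order of the monodromy) and does not follow from having a finite presentation.

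The paper instead uses a different quandle invariant that survives passage to the group only after imposing torsion on meridians: the subgroup $P_2(Q) = \ker\bigl(\As_2(Q) \to \As_2(Q)^{\mathrm{ab}}\bigr)$ of the associated $2$-group. By \Cref{Result of Winker}, $P_2(Q(K)) \cong \pi_1(\Sigma_2(K))$, the fundamental group of the double branched cover, and by \Cref{Result of Kanenobu-Sumi} these groups are distinguished for the $R_k^n$ by counting conjugacy classes of non-abelian $\SL(2,\C)$-representations ($k-1$ of them). Your general heuristic --- that the quandle retains information which the knot group forgets, and that one should extract a computable isomorphism invariant from it --- is the right one; the correction is that the invariant must be $P_r(Q)$ (equivalently, branched-cover data) rather than the type.
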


By \Cref{Main Theorem}, we obtain an infinite family of $n$-knots answering the above question for general $n \geq 2$.
Furthermore, by the following theorem, the knot quandles of Suciu's ribbon knots cannot be distinguished by their types.

\begin{theorem}\label{Type of suciu ribbon knots}
    For each $n \geq 2$ and $k \geq 2$, the type of the knot quandle of $R_k^n$ is equal to $\infty$.
\end{theorem}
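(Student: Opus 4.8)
The strategy is to extract the type from the associated group and reduce \Cref{Type of suciu ribbon knots} to a commutator computation in the common knot group $G := G(R_k^n)$. Recall from \Cref{Def of type} that the type of a quandle $Q$ is the least integer $t \ge 1$ with $x *^t y = x$ for all $x,y \in Q$, where $x *^t y := S_y^t(x)$, and is $\infty$ if no such $t$ exists. Consequently it is enough to produce a single pair $x,y \in Q(R_k^n)$ whose orbit $\{\, x *^t y : t \ge 0 \,\}$ is infinite.

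First I would pass to the associated group. Since the associated group of the knot quandle is the knot group $G$, there is a canonical quandle homomorphism
\[
    \eta \colon Q(R_k^n) \longrightarrow \mathrm{Conj}(G),
\]
where $\mathrm{Conj}(G)$ denotes the set $G$ equipped with the operation $g * h = h^{-1} g h$; it sends each generator to the associated meridian, so that its image is the conjugacy class $C$ of a meridian $\mu$. Because $\eta$ is a quandle homomorphism, $\eta(x *^t y) = \eta(y)^{-t}\,\eta(x)\,\eta(y)^{t}$, and therefore any pair with infinite orbit in $C$ lifts to a pair with infinite orbit in $Q(R_k^n)$. The task thus reduces to exhibiting meridians $\mu_1,\mu_2 \in C$ with $[\mu_1,\mu_2^{t}] \ne 1$ for every $t \ge 1$.

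Next I would carry out this computation inside the explicit (metabelian) model of $G$ recalled in \Cref{Section: ribbon}. Writing a meridian together with its conjugates in the corresponding normal form, one checks that $\mu_2^{t}$ is never central and isolates a meridian $\mu_1$ on which right multiplication by $\mu_2$ has an infinite orbit of pairwise distinct meridians. Feeding this back through $\eta$ produces the desired infinite orbit in $Q(R_k^n)$, whence the type is $\infty$.

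The heart of the matter is this last step: showing that the orbit never closes up, equivalently that no nonzero power of a meridian is central in $G$. This is exactly where the structure of $G$ must be used, and I expect it to be the main obstacle. If instead some meridian power turns out to be central—so that $\eta$ bounds the type only by a finite number—then the argument has to be run one level finer, directly in a quandle quotient of $Q(R_k^n)$ that is invisible to $\mathrm{Conj}(G)$ and to the Alexander module; it is in this finer quandle that the winding parameter $k \ge 2$ would be needed to keep the orbit from closing.
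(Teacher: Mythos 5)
Your reduction is the same as the paper's: pick two meridional elements of $Q(R_k^n)$, push them into the knot group via $\eta$, and observe that $x_0 *^i y_0 = x_0$ would force $y^{-i}xy^{i} = x$ in $G_k$, so it suffices to exhibit meridians $x,y$ with $[x,y^{i}]\neq 1$ for all $i\geq 1$. But that non-commutation statement is precisely the entire content of the theorem, and your proposal leaves it unproved: the step ``writing a meridian \dots in the corresponding normal form, one checks that \dots'' is a placeholder, and you yourself flag it as ``the main obstacle.'' Worse, the venue you propose for the computation cannot work. The group $G_k$ is isomorphic to $B_3$, the trefoil group, which is not metabelian (its commutator subgroup is free of rank $2$), so there is no ``explicit metabelian model'' of $G$ itself; and if you instead compute in the metabelian quotient $G/G''$, i.e.\ in the Alexander module $\Z[t^{\pm1}]/(t^2-t+1)$, the meridian acts with $t^6=1$, so every conjugation orbit closes up after at most $6$ steps and the method certifies nothing. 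Your fallback (a finer quandle quotient invisible to $\mathrm{Conj}(G)$) is not needed, but it is also not constructed, so it does not close the gap. A small logical slip as well: ``the orbit never closes up'' is not equivalent to ``no nonzero power of a meridian is central''; you need $y^{i}$ to fail to commute with the specific element $x$, which is strictly stronger than $y^i$ being non-central.

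What the paper actually does at this point is choose a representation of $B_3$ that is non-abelian enough to see the commutator: the Tong--Yang--Ma representation $\phi$ into $3\times 3$ matrices over $\Z[t^{\pm1}]$. Identifying $G_k$ with $B_3$ via \Cref{Isomorphisms of knot groups}, one gets explicit matrices $X=\phi(x)$ and $Y=\phi(y)$ (depending on $k \bmod 3$, since the auxiliary matrix $U=\phi(\tau^{-1}\sigma)$ satisfies $U^3=I$), and a direct computation of $Y^iX$ and $XY^i$ shows they differ for every $i\geq 1$. Some such explicit non-abelian certificate is indispensable here; without it your argument establishes only the (correct) reduction, not the theorem.
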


This note is organized as follows.
In \Cref{Section: quandle}, we review the definitions of knot quandles, associated groups, and types of quandles.
In \Cref{Section: ribbon}, we review Suciu's ribbon knots and prove \Cref{Main Theorem} and \Cref{Type of suciu ribbon knots}.

    \section{Knot quandles of $n$-knots}
\label{Section: quandle}

In this section, we introduce the knot quandles of $n$-knots ($n \geq 1$) and review the associated groups and types of quandles.

Let $K$ be an $n$-knot ($n \geq 1$), $N_K$ a tubular neighborhood of $K$, and $E_K = S^{n+2} \setminus N_K$ the exterior of $K$.
We fix a base point $p \in E_K$.
A \textit{meridional disk} of $K$ is a disk $D$ properly embedded in $N_K$ intersecting $K$ transversely at a single point, and a \textit{noose} of $K$ is a pair of a meridional disk $D$ of $K$ and a path $\alpha$ in $E_K$ from a point of $\partial D$ to $p$.
For a noose $(D, \alpha)$, the loop $\alpha^{-1}\, \partial D\, \alpha$ is called a \textit{meridional loop} of $K$, where $\partial D$ is a loop starting at $\alpha(0)$ and going once along $\partial D$ in the positive direction.
The set of homotopy classes of all nooses of $K$ is denoted by $Q(K, p)$.

\begin{definition}[\cite{Joyce82,Matveev84}]\label{Def of knot quandle}
    The \textit{knot quandle} of $K$ is $Q(K, p)$ equipped with the operation defined by $[(D, \alpha)]*[(D',\beta)] ~=~ [(D, \alpha \cdot \beta^{-1} \partial D' \beta)]$.
    Since the isomorphism class of $Q(K,p)$ is independent of the choice of $p$, we simply write $Q(K)$.
\end{definition}

% Let $Q(K, p)$ denote the set of homotopy classes of all paths in $E(K)$ from a point of $\partial E_K$ to $p$.
% The \textit{knot quandle} of $K$ is $Q(K, p)$ equipped with the binary operation defined by $[x]*[y] = [x\cdot y^{-1} \mu_y y]$ for $[x], [y] \in Q(K, p)$, where $\mu_y$ is a meridional loop starting at $y(0)$ and going along in the positive direction.
% Since the isomorphism class of $Q(K,p)$ is independent of the choice of $p$, we simply write $Q(K)$.

For a quandle $Q$ and an integer $r \geq 0$, the \textit{associated $r$-group}, denoted by $\As_r(Q)$, is the group with the presentation
\begin{align*}
    \langle x \in Q \mid x*y = y^{-1}xy, x^r = 1 ~(x,y \in Q) \rangle.
\end{align*}
In particular, the associated $0$-group is simply called the \textit{associated group} of a quandle $Q$ and denoted by $\As(Q)$.

% The subgroup $P_r(Q)$ of $\As_r(Q)$ is defined as follows:
To prove \Cref{Main Theorem}, we define the subgroup $P_r(Q)$ of $\As_r(Q)$.
% Let $F(Q)$ be the free group on $Q$.
% For a word $w \in F(Q)$, the sum of exponents of letters in $w$ is denoted by $\exp(w)$.

\begin{definition}[cf. \cite{Winker1984}]\label{Def of PnQ}
    Let $\psi_r: \As_r(Q) \to \As_r(Q)^{\mathrm{ab}}$ be the abelianization map.
    The subgroup $P_r(Q)$ of $\As_r(Q)$ is defined as the kernel of $\psi_r$.
\end{definition}
% \begin{align*}
%     H_n(Q) = \langle  \rangle
% \end{align*}

The map $\eta: Q(K) \to G(K)$ sending $[(D, \alpha)] \in Q(K)$ to $[\alpha^{-1}\partial D \alpha] \in G(K)$ induces a group isomorphism $\As(Q(K)) \to G(K)$.
We identify $\As(Q(K))$ with $G(K)$ by this isomorphism.
% The associated group $\As(Q(K))$ is isomorphic to the knot group of $K$, and each generator of $\As(Q(K))$ is represented by a meridional loop of $K$.
% Let $\eta: Q \to \As(Q)$ be the map sending $x \in Q$ to $x \in \As(Q)$.
Since all meridional loops of $K$ are conjugate to each other in $G(K)$, the preimage of each meridional loop of $K$ by $\eta$ is non-empty set.

In \cite[Theorem 5.2.2]{Winker1984}, Winker proved the following proposition for the case of 1-knots.
Here, we prove for general $n$-knots ($n\geq 1$).
% His argument can be extended to the case of $n$-knots:

\begin{proposition}\label{Result of Winker}
    Let $\Sigma_r(K)$ be the $r$-fold cyclic cover of $S^{n+2}$ branched along an $n$-knot $K$ ($r \geq 2$).
    Then, $P_r(Q(K))$ is group isomorphic to $\pi_1(\Sigma_r(K))$.
\end{proposition}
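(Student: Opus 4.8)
The plan is to realize both $P_r(Q(K))$ and $\pi_1(\Sigma_r(K))$ as explicit quotients of the knot group $G(K)=\As(Q(K))$ and then to match them. Let $\phi\colon G(K)\to\Z/r$ be the composite of the abelianization $G(K)\to H_1(E_K)\cong\Z$ (the last isomorphism by Alexander duality) with reduction modulo $r$; equivalently, $\phi$ sends every meridian to $1$, and it classifies the connected $r$-fold cyclic cover $\widetilde E_K\to E_K$. Write $H=\ker\phi$, fix a meridian $m$, and for a subset $S$ of a group $\Gamma$ denote by $\langle\langle S\rangle\rangle_{\Gamma}$ the normal closure of $S$ in $\Gamma$.

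First I would describe $\As_r(Q(K))$ algebraically. Since $\eta$ identifies $\As(Q(K))$ with $G(K)$ by sending the quandle generators to meridional loops, and all meridians are mutually conjugate in $G(K)$, the additional relations $x^r=1$ ($x\in Q(K)$) in the presentation of $\As_r(Q(K))$ cut out exactly $\langle\langle m^r\rangle\rangle_{G(K)}$; hence $\As_r(Q(K))\cong G(K)/\langle\langle m^r\rangle\rangle_{G(K)}$. Abelianizing and using $H_1(E_K)\cong\Z$ (with $m^r\mapsto r$) gives $\As_r(Q(K))^{\mathrm{ab}}\cong\Z/r$, and $\psi_r$ is the map induced by $\phi$. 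Consequently $P_r(Q(K))=\ker\psi_r=H/\langle\langle m^r\rangle\rangle_{G(K)}$.

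Next I would compute $\pi_1(\Sigma_r(K))$ geometrically. Decompose $\Sigma_r(K)=\widetilde E_K\cup_{\partial}\widetilde N$, where $\widetilde N\cong S^n\times D^2$ is the reglued tubular neighborhood of the branch $n$-sphere and the two pieces meet along $\partial\widetilde N\cong S^n\times S^1$; because $\phi(m)=1$, the boundary of $\widetilde E_K$ is the $r$-fold cover of the meridian, so the circle $\widetilde m\subset\partial\widetilde N$ bounding a meridian disk in $\widetilde N$ represents $m^r\in H$. For $n\ge2$ we have $\pi_1(\widetilde N)=1$ and $\pi_1(\partial\widetilde N)\cong\Z=\langle\widetilde m\rangle$, so van Kampen yields $\pi_1(\Sigma_r(K))\cong H/\langle\langle m^r\rangle\rangle_{H}$, where the normal closure is now taken inside $H=\pi_1(\widetilde E_K)$. (The case $n=1$, where $\widetilde N$ is a solid torus, gives the same conclusion: the meridian still bounds a disk and the longitude contributes no new relation.)

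It remains to reconcile the two presentations, which is the one genuinely nontrivial point. A priori the inclusion $\langle\langle m^r\rangle\rangle_{H}\subseteq\langle\langle m^r\rangle\rangle_{G(K)}$ only yields a surjection $\pi_1(\Sigma_r(K))\twoheadrightarrow P_r(Q(K))$, since the van Kampen computation kills a single lift of $m^r$ inside $H$, whereas $P_r(Q(K))$ is defined through the full normal closure in $G(K)$. I would prove that these two normal closures coincide. As $H\trianglelefteq G(K)$ and $G(K)/H$ is cyclic generated by the class of $m$, every $g\in G(K)$ has the form $g=m^jh$ with $h\in H$; it therefore suffices to check that conjugation by $m$ preserves $\langle\langle m^r\rangle\rangle_H$, and indeed $m\,(h m^r h^{-1})\,m^{-1}=(mhm^{-1})\,m^r\,(mhm^{-1})^{-1}$, using only that $m^r$ commutes with $m$ and that $mhm^{-1}\in H$. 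Hence $\langle\langle m^r\rangle\rangle_H$ is normal in $G(K)$ and equals $\langle\langle m^r\rangle\rangle_{G(K)}$, giving $\pi_1(\Sigma_r(K))\cong H/\langle\langle m^r\rangle\rangle_{G(K)}=P_r(Q(K))$. The main obstacle is precisely this comparison of normal closures; the geometric and algebraic computations are each routine once set up.
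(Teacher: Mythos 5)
Your proof is correct and follows essentially the same route as the paper's: both identify $P_r(Q(K))$ with $\ker(\psi)/\langle\langle \mu^r\rangle\rangle$ (after observing that $\As_r(Q(K))$ is $G(K)$ modulo the normal closure of $\mu^r$) and then appeal to the standard description of the fundamental group of the branched cover --- the paper packages the bookkeeping in a nine-lemma diagram and cites the branched-cover fact as known, while you do the bookkeeping by hand and derive that fact via van Kampen. Your explicit check that $\langle\langle \mu^r\rangle\rangle_{H}=\langle\langle \mu^r\rangle\rangle_{G(K)}$ (the normal closure in $\ker\psi$ versus in the whole knot group) addresses a point the paper's proof leaves implicit, and it is exactly the right thing to verify.
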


\begin{proof}
    We write $Q = Q(K)$, $G = \As(Q)$, and $H = \As_r(Q)$ for simplicity.
    Let $\psi: G \to \Z/r\Z$ be the composition of the abelianization map $G \to \Z$ and the quotient map $\Z \to \Z/r\Z$ and $\psi_r: H \to \Z/r\Z$ the abelianization map.
    By the definition, we have $\mathrm{Ker}(\psi_r) = P_r(Q)$.

    Let $M_r(K)$ be the $r$-fold cyclic cover of $S^{n+2} \setminus K$.
    It is known that $\pi_1(M_r(K)) = \mathrm{Ker}(\psi)$ and $\pi_1(\Sigma_r(K)) = \pi_1(M_r(K))/ \langle\langle \mu^r \rangle\rangle$, where $\mu \in G$ is a meridional loop of $K$.

    Since $H$ is obtained from $G$ by adding relations $x^r = 1$ for all $x \in Q$, the map $\pi: G \to H$ defined by $\pi(x) = x$ is a surjective group homomorphism.
    In addition, $\ker(\pi) = \langle\langle \mu^r \rangle\rangle$ holds.
    Thus, we obtain the following commutative diagram:
    \[
        \begin{tikzcd}
            & 1 \arrow[d] & 1 \arrow[d] & 1 \arrow[d] \\[-2mm]
            1 \arrow[r] & \langle\langle \mu^r \rangle\rangle\arrow[r, equal]\arrow[d, "f"] & \langle\langle \mu^r \rangle\rangle\arrow[d, "\ker(\pi)"]\arrow[r] & 1\arrow[d]\arrow[r] & 1\\
            1 \arrow[r] & \pi_1(M_r(K)) \arrow[r, "\ker(\psi)"]\arrow[d, "g"] & G\arrow[r, "\psi"]\arrow[d, "\pi"] & \Z/r\Z\arrow[d, equal]\arrow[r] & 1\\
            1 \arrow[r] & P_r(Q) \arrow[r, "\ker(\psi_r)"]\arrow[d] & H\arrow[r, "\psi_r"]\arrow[d] & \Z/r\Z\arrow[r]\arrow[d] & 1\\[-2mm]
            & 1 & 1 & 1
        \end{tikzcd}
    \]
    Here, group homomorphisms $f$ and $g$ are obtained by universality of $\ker(\psi)$ and $\ker(\psi_r)$, respectively.
    Hence, by the nine lemma, we have the short exact sequence % extension
    \begin{align*}
        1 \to \langle\langle \mu^r \rangle\rangle \to \pi_1(M_r(K)) \to P_r(Q(K)) \to 1.
    \end{align*}
    Therefore, $P_r(Q)$ is isomorphic to $\pi_1(M_r(K))/ \langle\langle \mu^r \rangle\rangle \cong \pi_1(\Sigma_r(K))$.
\end{proof}
% In \cite{Tanaka-Taniguchi2023-arXiv}, they proved that for $n \geq 2$ and a non-trivial 1-knot $k$, the type of the knot quandle $Q(\tau^n(k))$ is equal to $n$.

\begin{definition}\label{Def of type}
    The \textit{type} of a quandle $Q$ is the positive integer (or $\infty$) defined by
    \begin{align*}
        \mathrm{type}(Q) ~=~ \min \{i \in \Z_{\geq 1} \mid x*^i y = x\mbox{ for any $x, y \in Q$}\} \cup \{\infty\},
    \end{align*}
    where $x*^i y = S_y^n(x)$.
    Here, $\mathrm{type}(Q) = \infty$ means there is a pair of elements $x,y \in Q$ such that $x*^i y \neq x$ for any $i \geq 1$.
\end{definition}

In \cite{Tanaka-Taniguchi2025}, Tanaka and Taniguchi showed that the type of the knot quandle of a fibered 2-knot is equal to the order of the monodromy of the fibration.
    \section{Suciu's ribbon knots}\label{Section: ribbon}

We recall the notions of Suciu's ribbon $n$-knots $R_k^n$.
In the following two paragraphs, we identify $S^{n+2} = \R^{n+2} \cup \{\infty\}$ and $\R^{n+2} = \R^3 \times \R^{n-1}$.

Let $n \geq 1$ be an integer and $D^n$ a $n$-disk.
An \textit{$n$-link} $L$ is a union of disjoint $n$-knots, and an $n$-link is called \textit{trivial} if it is a union of boundaries of disjoint $(n+1)$-balls embedded in $S^{n+2}$.
A \textit{1-handle} $h$ attached to an $n$-link $L$ is the image of an embedding $h: D^{n}\times [0,1] \to S^{n+2}$ such that $h(D^n\times [0,1]) \cap L = h(D^n \times \{0,1\})$.
A 1-handle attached to a 1-link is also called a \textit{band}.
An $n$-knot $K$ is called \textit{ribbon} if it is obtained from a trivial $n$-link $L_0$ by surgery along disjoint 1-handles $h_1, \dots, h_m$:
\begin{align*}
    K ~=~ L_0 \cup \partial H \setminus \mathrm{int}(H \cap L_0), \quad H ~=~ \bigcup_{i=1}^m h_i(D^n \times [0,1]).
\end{align*}

Let $l_0$ be a 3-component trivial 1-link and $d_0$ a union of disjoint disks in $\R^3$ with $\partial d_0 = l_0$.
For each $k \geq 1$, let $b_k$ be the union of disjoint two bands attached to $l_0$ as shown in \Cref{Fig: diagram of banded links}.
Fix an integer $n \geq 2$.
Set $L_0 = \partial (d_0 \times [-2,2]^{n-1})$ and $H_k = b_k \times [-1,1]^{n-1}$.
Then, $L_0$ is the $3$-component trivial $n$-link in $\R^{n+2}$ and $H_k$ is the union of disjoint two 1-handles attached to $L_0$.
The result of surgery on $L_0$ along $H_k$, denoted by $R_k^n$, is an $n$-sphere.
We call $R_k^n$ \textit{Suciu's ribbon $n$-knots} \cite{Suciu1985}.
\begin{figure}[h]
    \centering
    \includegraphics[width = \hsize]{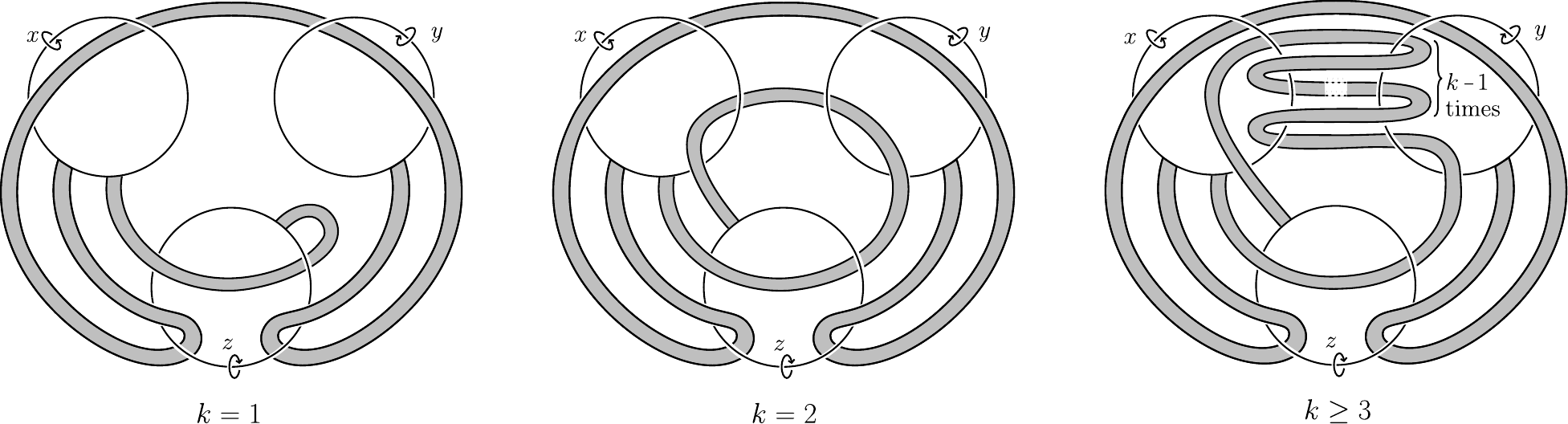}
    \caption{Diagrams of $l_0$ and $b_k$.}
    \label{Fig: diagram of banded links}
\end{figure}

Since $R_k^n$ is obtained from $L_0$ by surgery along 1-handles, $G(R_k^n)$ is generated by meridional loops of $L_0$ with two relations associated with two 1-handles.
Thus, we have the following presentation of $G(R_k^n)$:
\begin{align*}
    G_k ~=~ \langle x,y,z \mid x = y^V,~ x = z^W \rangle,
\end{align*}
where $x, y, z$ are represented by meridional loops of $L_0$ as shown in \Cref{Fig: diagram of banded links}, $V = zyx^{-1}z^{-1}$, $W = (xy^{-1})^{k-1}z^{-1}$, and $x^V = V^{-1}xV$.
Each relation is obtained by reading crossings between $l_k$ and $b_k$ along each band of $b_k$ (\cite{Yajima1969}).

Let $B_3$ be the braid group on three strands with the presentation $\langle \sigma, \tau \mid \sigma\tau\sigma =  \tau\sigma\tau \rangle$.

\begin{proposition}[\cite{Kanenobu-Sumi2020-JKTR, Suciu1985}]\label{Isomorphisms of knot groups}
    For each $k \geq 1$, $G_k$ is isomorphic to $B_3$ by the isomorphism sending
    \begin{align*}
        x \mapsto \tau^{-1}\sigma\tau(\tau^{-1}\sigma)^{k-1}, \quad y \mapsto \sigma(\tau^{-1}\sigma)^{k-1}, \quad z \mapsto (\tau^{-1}\sigma)^k\tau.
    \end{align*}
\end{proposition}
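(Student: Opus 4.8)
The plan is to prove the isomorphism by exhibiting mutually inverse group homomorphisms $\phi\colon G_k\to B_3$ and $\psi\colon B_3\to G_k$. First I would define $\phi$ on generators by the three words in the statement, so that checking that $\phi$ is well defined reduces to verifying that the two defining relators of $G_k$ become trivial in $B_3$. Writing $u=\tau^{-1}\sigma$ (so that $\sigma=\tau u$), the images simplify to $\phi(x)=u\tau u^{k-1}$, $\phi(y)=\tau u^{k}$, and $\phi(z)=u^{k}\tau$, and the braid relation takes the convenient form $\tau^{-1}\sigma\tau=\sigma\tau\sigma^{-1}$. Using this, I would reduce $\phi(V)^{-1}\phi(y)\phi(V)$ and $\phi(W)^{-1}\phi(z)\phi(W)$ and confirm that both equal $\phi(x)$; the computation is driven entirely by the single braid relation, the only subtlety being that the powers coming from $V=zyx^{-1}z^{-1}$ and $W=(xy^{-1})^{k-1}z^{-1}$ must be carried through uniformly in $k$.

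To build the inverse $\psi$, I would invert the defining formulas. Working in $B_3$, the images satisfy $\phi(x)=u\,\phi(y)\,u^{-1}$ and $\phi(z)=u^{k}\,\phi(y)\,u^{-k}$, which pins down the shape of the inverse: $\sigma$ and $\tau$ should be recovered as specific words in $x,y,z$ organized through the element $u$. Concretely, I would produce candidate words $\tilde\sigma,\tilde\tau$ in $x,y,z$, set $\psi(\sigma)=\tilde\sigma$ and $\psi(\tau)=\tilde\tau$, and then verify (a) that the braid relation $\tilde\sigma\tilde\tau\tilde\sigma=\tilde\tau\tilde\sigma\tilde\tau$ is a consequence of the two relators of $G_k$, so that $\psi$ is well defined, and (b) that $\phi\circ\psi$ and $\psi\circ\phi$ act as the identity on generators. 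Passing through $u$ keeps these verifications structurally parallel to the ones for $\phi$.

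The main obstacle is the passage from the special case to general $k$. Because $W=(xy^{-1})^{k-1}z^{-1}$ itself involves $z$, the second relation $x=z^{W}$ is implicit in $z$ and cannot be resolved by a one-line substitution; only when $k=1$ does $W=z^{-1}$ force $x=z$, after which the first relation collapses to $xyx=yxy$ and exhibits $G_1\cong B_3$ at once. Thus the genuine work is the uniform-in-$k$ word manipulation showing that the two relators are equivalent, up to conjugation, to the single braid relation. I expect to manage the $(\tau^{-1}\sigma)^{k-1}$ and $(xy^{-1})^{k-1}$ powers either by induction on $k$ or by first establishing a normal form for the conjugates $u^{m}(\,\cdot\,)u^{-m}$. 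Conceptually nothing beyond $\sigma\tau\sigma=\tau\sigma\tau$ is needed, so the difficulty is bookkeeping rather than idea; I would treat $k=1$ first to fix the generator correspondence and use it as the base of the induction.
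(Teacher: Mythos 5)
The paper does not actually prove this proposition in-house: its ``proof'' is a one-line appeal to the explicit Tietze transformations of Kanenobu--Sumi. Your strategy---define $\phi$ on generators, check the two relators die in $B_3$, and exhibit an explicit inverse---is exactly the content of such a Tietze-transformation argument, so the \emph{approach} is the right one and matches what the cited reference does. Your preliminary simplifications are also correct: with $u=\tau^{-1}\sigma$ one indeed gets $\phi(x)=u\tau u^{k-1}=u\,\phi(y)\,u^{-1}$, $\phi(y)=\tau u^{k}$, $\phi(z)=u^{k}\tau=u^{k}\,\phi(y)\,u^{-k}$, and your analysis of $k=1$ (where $W=z^{-1}$ forces $x=z$ and the first relator collapses to $xyx=yxy$) is accurate.

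The gap is that everything after these observations is announced rather than done, and what is deferred is the entire mathematical content of the statement. You never verify that $\phi(V)^{-1}\phi(y)\phi(V)=\phi(x)$ and $\phi(W)^{-1}\phi(z)\phi(W)=\phi(x)$ hold in $B_3$ for general $k$---without this, $\phi$ is not even known to be well defined---and you never write down the candidate words $\tilde\sigma,\tilde\tau$ in $x,y,z$, so neither well-definedness of $\psi$ nor the identities $\phi\circ\psi=\mathrm{id}$, $\psi\circ\phi=\mathrm{id}$ can be checked. The phrase ``the computation is driven entirely by the single braid relation'' is a plausible expectation, not a proof; the $k$-dependence sits inside $W=(xy^{-1})^{k-1}z^{-1}$ and the powers $u^{k}$, and the proposed induction on $k$ is never set up (no inductive hypothesis, no inductive step). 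To close the gap you would either carry out the uniform-in-$k$ word computation explicitly (e.g.\ using $\phi(xy^{-1})=\tau^{-1}\sigma\tau\sigma^{-1}$ to control $\phi(W)$) or, as the paper does, cite \cite{Kanenobu-Sumi2020-JKTR} where these Tietze moves are written out.
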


\begin{proof}
    It follows from the explicit Tietze transformations between $G_k$ and $B_3$ constructed in \cite[Proposition 2.1]{Kanenobu-Sumi2020-JKTR}.
\end{proof}

As mentioned in the proof of \Cref{Result of Winker}, the fundamental group $\pi_1(\Sigma_2(R_k^n))$ is isomorphic to $\mathrm{Ker}(\psi)/\langle \langle x^2 \rangle \rangle$, where $\psi: G_k \to \Z/2\Z$ is the composition of the abelianization map of $G_k$ and the quotient map $\Z \to \Z/2\Z$.
In \cite[Proposition 2.2]{Kanenobu-Sumi2020-JKTR}, Kanenobu and Sumi gave the following presentation of $\mathrm{Ker}(\psi)/\langle \langle x^2 \rangle \rangle$ by the Reidemeister-Schreier method:
\begin{align*}
    S_k ~=~ \langle a, b \mid ba^{k-1}b = a^k, ~ ab^{k-1}a = b \rangle.
\end{align*}
This argument can be extended to general $R_k^n$ with $n \geq 2$.
Concretely, the isomorphism $f: S_k \to \mathrm{Ker}(\psi)/\langle \langle x^2 \rangle \rangle$ is defined by
\begin{align*}
    a \mapsto xz^{-1}xy^{-1}zx^{-1}, \quad b \mapsto yx^{-1}.
\end{align*}
Furthermore, they proved the following (\cite[Proposition 2.3]{Kanenobu-Sumi2020-JKTR}):

\begin{proposition}[\cite{Kanenobu-Sumi2020-JKTR}]\label{Result of Kanenobu-Sumi}
    For each $n \geq 2$ and $k \geq 1$, the number of conjugacy classes of non-abelian $\SL(2, \C)$-representations of $S_k$ is $k-1$.
\end{proposition}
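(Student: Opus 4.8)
The plan is to determine the $\SL(2,\C)$-character variety of $S_k$ in trace coordinates and to count its irreducible points. For a representation $\rho\colon S_k\to\SL(2,\C)$ put $A=\rho(a)$, $B=\rho(b)$ and $x=\Tr A$, $y=\Tr B$, $z=\Tr(AB)$. By Fricke's theorem for the free group $\langle a,b\rangle$, two irreducible representations are conjugate if and only if they have the same triple $(x,y,z)$; the triples coming from irreducible representations are exactly those with $x^2+y^2+z^2-xyz-4\neq0$; and a representation is reducible if and only if $x^2+y^2+z^2-xyz-4=0$. Thus conjugacy classes of irreducible representations of $S_k$ correspond bijectively to the points of $\C^3$ that satisfy the equations imposed by the two relators and lie off this reducibility hypersurface, and the task reduces to translating the relators into polynomial equations in $(x,y,z)$ and counting such points.

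For the translation I would use the Cayley--Hamilton identity $M^2=(\Tr M)M-I$, which yields $A^{-1}=xI-A$ and, inductively, $A^m=f_m(x)A-f_{m-1}(x)I$ for the Chebyshev-type polynomials defined by $f_{m+1}=xf_m-f_{m-1}$, $f_0=0$, $f_1=1$ (and symmetrically for powers of $B$ in the variable $y$). Because $\{I,A,B,AB\}$ is a basis of the $2\times2$ matrices whenever $\rho$ is irreducible, a relator $W=W'$ holds as a matrix identity if and only if $\Tr(WC)=\Tr(W'C)$ for $C\in\{I,A,B,AB\}$; these scalar conditions are not merely necessary but sufficient on the irreducible locus, so a triple off the hypersurface satisfying them really does come from a representation of $S_k$. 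Running $BA^{k-1}B=A^k$ and $AB^{k-1}A=B$ through these rules produces an explicit, if unwieldy, polynomial system in $(x,y,z)$ with Chebyshev coefficients.

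Two further steps finish the count. First one disposes of reducible representations with non-abelian image: conjugating such a representation to upper-triangular form and imposing the two relators should force the off-diagonal (unipotent) parts to vanish, so that every reducible representation of $S_k$ is abelian and ``non-abelian'' coincides with ``irreducible''; this is exactly what happens when $k=2$, where the relator $bab=a^2$ forces the off-diagonal entry of $A$ to be zero. Second one solves the trace system. For $k=2$ it already collapses to $z=xy/2$, $yz=(x-1)(x+2)$, $z^2=(x-1)^2(x+2)$, with solution set $\{(1,0,0),(-2,0,0),(2,2,2),(2,-2,-2)\}$; all but $(1,0,0)$ lie on the reducibility hypersurface, so there is exactly one irreducible class, matching $k-1=1$. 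For general $k$ I expect the system to express $y$ and $z$ through $x$ and to reduce to a single univariate equation, built from the polynomials $f_m$, exactly $k-1$ of whose roots survive the irreducibility constraint.

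The crux of the proof is this last count. Performing the elimination for arbitrary $k$, identifying the governing univariate polynomial together with its factorization, and proving that precisely $k-1$ of its roots yield irreducible representations with pairwise distinct characters---while discarding the spurious solutions lying on $x^2+y^2+z^2-xyz-4=0$ and treating the degenerate values $x,y\in\{\pm2\}$ at which the trace normal forms break down---is where the real work lies.
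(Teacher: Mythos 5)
Your overall strategy---parametrizing the $\SL(2,\C)$-characters of the two-generator group $S_k$ by the Fricke coordinates $(x,y,z)=(\Tr A,\Tr B,\Tr(AB))$, converting the relators $ba^{k-1}b=a^k$ and $ab^{k-1}a=b$ into polynomial conditions via Cayley--Hamilton and the Chebyshev-type polynomials $f_m$, and counting solutions off the reducibility hypersurface $x^2+y^2+z^2-xyz-4=0$---is the standard route, and it is essentially the approach of the reference that the paper cites for this statement (the paper itself gives no proof beyond the citation to Kanenobu--Sumi). The framework you set up is sound: on the irreducible locus the four scalar conditions $\Tr(WC)=\Tr(W'C)$ for $C\in\{I,A,B,AB\}$ are equivalent to the matrix identity $W=W'$, and irreducible characters are determined by the trace triple.

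However, what you have written is a plan rather than a proof, and the gap sits exactly where you yourself say ``the real work lies.'' The proposition asserts an exact count, $k-1$, for every $k\geq 1$, and your argument establishes it only for $k=2$. For general $k$ you assert, without computation, that the trace system ``should'' eliminate to a single univariate equation exactly $k-1$ of whose roots survive; nothing in the proposal identifies that polynomial, its degree or factorization, or why precisely $k-1$ of its roots avoid the hypersurface $x^2+y^2+z^2-xyz-4=0$ and yield pairwise distinct characters. Likewise, the reduction from ``non-abelian'' to ``irreducible''---i.e., that every reducible representation of $S_k$ has abelian image---is verified only for $k=2$; for general $k$ the upper-triangular analysis of the two relators is asserted (``should force the off-diagonal parts to vanish''), not performed, and it is needed because a reducible non-abelian representation would otherwise contribute to the count. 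Until the elimination is carried out for arbitrary $k$, including the degenerate values $x,y\in\{\pm 2\}$ that you flag, the honest conclusion of your text is only ``the count is $1$ when $k=2$, and I expect a similar computation to give $k-1$ in general,'' which does not prove the proposition.
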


Finally, we prove \Cref{Main Theorem} and \Cref{Type of suciu ribbon knots}.

\begin{proof}[Proof of \Cref{Main Theorem}]
    By \Cref{Result of Winker}, $P_2(Q(R_k^n))$ is isomorphic to $\pi_1(\Sigma_2(R_k^n))$.
    By \Cref{Result of Kanenobu-Sumi}, the fundamental groups $\pi_1(\Sigma_2(R_k^n))$, $k \geq 1$, are mutually non-isomorphic.
    Since $P_2(Q)$ is an invariant of the isomorphism class of a quandle $Q$, the knot quandles $Q(R_k^n)$, $k \geq 1$, are mutually non-isomorphic.
\end{proof}

\begin{proof}[Proof of \Cref{Type of suciu ribbon knots}]
    Let $x$ and $y$ be the generators of $G_k$.
    Since $x$ and $y$ are represented by meridional loops, there exist $x_0, y_0 \in Q(R_k^n)$ such that $\eta(x_0) = x$ and $\eta(y_0) = y$, where $\eta: Q(R_k^n) \to G_k$ is the map defined in \Cref{Section: quandle}.
    We prove that $x_0*^i y_0 \neq x_0$ holds for any $i \geq 1$.

    Let $\phi: B_3 \to \GL(2, \Z[t^{\pm 1}])$ be the group homomorphism defined by
    \begin{align*}
        \sigma \mapsto
        \begin{pmatrix}
            0 & 1 & 0 \\
            t & 0 & 0 \\
            0 & 0 & 1
        \end{pmatrix}, \quad
        \tau \mapsto
        \begin{pmatrix}
            1 & 0 & 0 \\
            0 & 0 & 1 \\
            0 & t & 0
        \end{pmatrix}.
    \end{align*}
    This homomorphism $\phi$ is called the \textit{Tong-Yang-Ma representation} \cite{Tong-Yang-Ma1996} of the braid group $B_3$.
    We identify $G_k$ with $B_3$ by \Cref{Isomorphisms of knot groups}.
    Then, $\phi$ induces a group homomorphism $\phi: G_k \to \GL(2, \Z[t^{\pm 1}])$.

    \begin{claim}\label{claim}
        $\phi(y^i)\phi(x) \neq \phi(x)\phi(y^i)$ holds for any $i \geq 1$.
    \end{claim}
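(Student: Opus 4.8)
The plan is to render $\phi(x)$ and $\phi(y)$ fully explicit and to exploit that the Tong--Yang--Ma matrices are \emph{monomial}: each row and each column carries a single nonzero entry, a unit of the form $\pm t^{j}$. This property is preserved under products and powers, which is exactly what makes the commutator tractable. Writing $A = \phi(\tau^{-1}\sigma)$, I would first record the identity $A^{3} = I$, verified at once from the action $e_{1} \mapsto t e_{3} \mapsto e_{2} \mapsto e_{1}$. Since \Cref{Isomorphisms of knot groups} gives $\phi(x) = A\,\phi(\tau)\,A^{k-1}$ and $\phi(y) = \phi(\tau)\,A^{k}$, the relation $A^{3} = I$ shows that $\phi(x)$ and $\phi(y)$ depend only on $k \bmod 3$, collapsing the problem to three explicit cases.

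In each residue class I would write down the monomial matrices $\phi(x)$ and $\phi(y)$ and read off their underlying permutations $\pi_{x}, \pi_{y} \in S_{3}$. The picture is uniform across the three cases: $\pi_{y}$ is a transposition, $\pi_{x}$ is a \emph{different} transposition, and the diagonal matrix $\phi(y)^{2}$ takes distinct values at the two indices moved by $\pi_{x}$. For instance, when $k \equiv 0 \pmod 3$ one has $\phi(y) = \phi(\tau)$ with $\pi_{y} = (2\,3)$, while $\pi_{x} = (1\,2)$ and $\phi(y)^{2} = \mathrm{diag}(1, t, t)$.

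The claim then follows from the elementary bookkeeping of monomial matrices: a product of two monomial matrices with underlying permutations $\alpha$ and $\beta$ is monomial with permutation $\alpha\beta$. Hence $\phi(x)\phi(y)^{i}$ and $\phi(y)^{i}\phi(x)$ carry the permutations $\pi_{x}\pi_{y}^{i}$ and $\pi_{y}^{i}\pi_{x}$. For odd $i$ we have $\pi_{y}^{i} = \pi_{y}$, and two distinct transpositions of $S_{3}$ never commute, so the two products have their nonzero entries in different positions and are unequal. For even $i \geq 2$, $\pi_{y}^{i}$ is trivial, so $\phi(y)^{i} = (\phi(y)^{2})^{i/2}$ is diagonal; since its two entries at the indices of the $2$-cycle of $\pi_{x}$ differ---a comparison of Laurent monomials such as $1$ against $t^{i/2}$, which are distinct in $\Z[t^{\pm 1}]$ for $i \geq 2$---the monomial matrix $\phi(x)$ cannot commute with it. Either way $\phi(x)\phi(y)^{i} \neq \phi(y)^{i}\phi(x)$.

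\textbf{Main obstacle.} There is no genuine analytic difficulty here; the work is to isolate the monomial structure and to carry out the three short $3 \times 3$ computations. The two facts doing all the work are the identity $A^{3} = I$, which removes the apparent dependence on $k$ and leaves only $k \bmod 3$, and the observation that in every residue class $\pi_{x}$ and $\pi_{y}$ are distinct transpositions while $\phi(y)^{2}$ separates the pair of coordinates that $\pi_{x}$ exchanges. I expect the only place to be careful is the even-$i$ case, where one must confirm that raising $\phi(y)^{2}$ to the power $i/2$ keeps the two relevant diagonal entries distinct for every $i \geq 2$.
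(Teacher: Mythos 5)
Your proposal is correct and follows essentially the same route as the paper's proof: both exploit $\phi(\tau^{-1}\sigma)^{3}=I$ to reduce to the three residues of $k$ modulo $3$ and then split on the parity of $i$, the only difference being that where the paper multiplies out $Y^{i}X$ and $XY^{i}$ explicitly and compares entries, you organize the same computation via the underlying permutations of the monomial matrices (distinct transpositions for odd $i$, a diagonal matrix separating the indices swapped by $\pi_x$ for even $i$). That packaging is a tidy way to see uniformly why the products differ, but it is a reorganization of the paper's calculation rather than a genuinely different argument; there is no gap.
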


    We show \Cref{Type of suciu ribbon knots} using the claim.
    Suppose that there exists $i \geq 1$ such that $x_0*^i y_0 = x_0$.
    Since $\eta(x_0*^i y_0) = y^{-i} x y^{i}  \in G_k$, we have $y^{-i} x y^{i} = x$.
    Then, by the claim, this contradicts that $\phi$ is a group homomorphism.
    Hence, $\mathrm{type}(Q(R_k^n)) = \infty$ holds for $k \geq 1$ and $n \geq 2$.
\end{proof}

\begin{proof}[Proof of \Cref{claim}]
    Let $X = \phi(x)$, $Y = \phi(y)$.
    We show $Y^i X \neq X Y^i$ for each $i \geq 1$ and $k \geq 1$.
    Let $X_0$, $Y_0$, and $U$ be the matrices defined by
    \begin{align*}
        X_0 ~=~ \phi(\tau^{-1} \sigma \tau) =
        \begin{pmatrix}
            0 & 1 & 0 \\
            1 & 0 & 0 \\
            0 & 0 & t
        \end{pmatrix}, \quad
        Y_0 ~=~ \phi(\sigma) =
        \begin{pmatrix}
            1 & 0 & 0 \\
            0 & 0 & 1 \\
            0 & t & 0
        \end{pmatrix}, \quad
        U ~=~ \phi(\tau^{-1} \sigma) =
        \begin{pmatrix}
            0 & 0 & t^{-1} \\
            1 & 0 & 0 \\
            0 & t & 0
        \end{pmatrix}.
    \end{align*}
    By \Cref{Isomorphisms of knot groups}, we have $X = X_0 U^{k-2}$ and $Y = Y_0 U^{k-1}$.
    In addition, $U^3 = I$ is the identify matrix.
    Thus, the claim can be divided into three cases depending on the value of $k \pmod{3}$:
    \[
        \mbox{\textbf{(a)}}~ X = X_0 U   \mbox{ and } Y = Y_0 U^2, \quad
        \mbox{\textbf{(b)}}~ X = X_0 U^2 \mbox{ and } Y = Y_0, \quad
        \mbox{\textbf{(c)}}~ X = X_0     \mbox{ and } Y = Y_0 U. \quad
    \]
    % \textbf{(a)} $X = X_0 U$ and $Y = Y_0 U^2$, \textbf{(b)} $X = X_0 U^2$ and $Y = Y_0$, and \textbf{(c)} $X = X_0$ and $Y = Y_0 U$.
    % Thus, we may show the claim for the three cases; (a) $k \equiv 0 \pmod{3}$, (b) $k \equiv 1 \pmod{3}$, and (c) $k \equiv 2 \pmod{3}$.
    In this note, we only check the case \textbf{(a)}, and the other cases can be shown in a similar way.

    From the assumption, $X$ and $Y$ are explicitly given by
        \begin{align*}
        X =
        \begin{pmatrix}
            0 & 1 & 0 \\
            1 & 0 & 0 \\
            0 & 0 & t
        \end{pmatrix}
        \begin{pmatrix}
            0 & 0 & t^{-1} \\
            1 & 0 & 0 \\
            0 & t & 0
        \end{pmatrix} =
        \begin{pmatrix}
            1 & 0 & 0 \\
            0 & 0 & t^{-1} \\
            0 & t^2 & 0
        \end{pmatrix}, \quad
        Y =
        \begin{pmatrix}
            1 & 0 & 0 \\
            0 & 0 & 1 \\
            0 & t & 0
        \end{pmatrix}
        \begin{pmatrix}
            0 & 1 & 0 \\
            0 & 0 & t^{-1} \\
            t & 0 & 0
        \end{pmatrix} =
        \begin{pmatrix}
            0 & 1 & 0 \\
            t & 0 & 0 \\
            0 & 0 & 1
        \end{pmatrix}.
    \end{align*}
    For an integer $j \geq 0$, we have
    \begin{align*}
        Y^{2j} ~=~
        \begin{pmatrix}
            t^j & 0 & 0 \\
            0 & t^j & 0 \\
            0 & 0 & 1
        \end{pmatrix}, \quad
        Y^{2j+1} ~=~
        \begin{pmatrix}
            0 & t^j & 0 \\
            t^{j+1} & 0 & 0 \\
            0 & 0 & 1
        \end{pmatrix}.
    \end{align*}
    Hence,
    \begin{align*}
        Y^i X ~=~
        \begin{cases}
        \begin{pmatrix}
            t^j & 0 & 0 \\
            0 & 0 & t^{j-1} \\
            0 & t^2 & 0
        \end{pmatrix} & (i = 2j),\\
        \begin{pmatrix}
            0 & 0 & t^{j-1} \\
            t^{j+1} & 0 & 0 \\
            0 & t^2 & 0
        \end{pmatrix} & (i = 2j+1).
        \end{cases} \quad
        X Y^i ~=~
        \begin{cases}
        \begin{pmatrix}
            t^j & 0 & 0 \\
            0 & 0 & t^{-1} \\
            0 & t^{j+2} & 0
        \end{pmatrix} & (i = 2j),\\
        \begin{pmatrix}
            0 & t^j & 0 \\
            0 & 0 & t^{-1} \\
            t^{j+3} & 0 & 0
        \end{pmatrix} & (i = 2j+1).
        \end{cases}
    \end{align*}
    Therefore, $Y^i X \neq X Y^i$ holds for any integer $i \geq 1$.
\end{proof}
    % \section*{Acknowledgements}

\medskip
\noindent
\textbf{Acknowledgements.}
This work was supported by JSPS KAKENHI Grant Number 25K23341 and Research Fellowship Promoting International Collaboration, The Mathematical Society of Japan.

    % \printbibliography
    % \addresses
    \bibliographystyle{plain}
    \bibliography{reference.bib}
\end{document}